\newtheorem{theorem}{Theorem}
\newtheorem*{theorem*}{Theorem}
\newtheorem{proposition}{Proposition}
\newtheorem{remark}{Remark}
\title{The automorphism group of the $s$-stable Kneser graphs\footnote{Partially supported by MathAmSud Project 13MATH-07
(Argentina--Brazil--Chile--France), PIP-CONICET 11220120100277 and PICT-2012-1324.}}
\author{Pablo Torres \footnote{Universidad Nacional de Rosario and CONICET, Argentina. e-mail: ptorres@fceia.unr.edu.ar}}
\date{}
\begin{document}
\maketitle
\sloppy
%
%
%
%
%
%
%
%
%

\begin{abstract}
For $k,s\geq2$, the $s$-stable Kneser graphs are the graphs with vertex set the
$k$-subsets $S$ of $\{1,\ldots,n\}$ such that the circular distance between any two elements in $S$
is at least $s$ and two vertices are adjacent if and only if the corresponding $k$-subset
are disjoint. Braun showed that for $n\geq 2k+1$ the automorphism group of the $2$-stable Kneser graphs (Schrijver graphs) is isomorphic to the dihedral group of order $2n$.
In this paper we generalize this result by proving that for $s\geq 2$ and $n\geq sk+1$ the automorphism group of the $s$-stable Kneser graphs also is isomorphic to the dihedral group of order $2n$.\\

\noindent {\bf Keywords}: Stable Kneser graph, Automorphism group.
\end{abstract}


\section{Introduction}\label{sec:intro}

Given a graph $G$, $V(G)$, $E(G)$ and $\mbox{Aut}(G)$ denote its vertex set, edge set and authomorphism group, respectively.
Let $[n]:=\{1,2,3,\ldots,n\}$. For positive integers $n$ and $k$ such that $n\geq2k$,
the \emph{Kneser graph} $KG(n,k)$ has as vertices
the $k$-subsets of $[n]$ with edges defined by disjoint pairs of $k$-subsets.
A subset $S\subseteq [n]$ is \emph{$s$-stable} if any two of its elements are at least ''at distance $s$ apart'' on the $n$-cycle, i.e. $s\leq |i-j|\leq n-s$ for distinct
$i,j\in S$. For $s,k\geq 2$, we denote $[n]^k_s$ the family of $s$-stable $k$-subsets of $[n]$.
The \emph{$s$-stable Kneser graph} $KG(n,k)_{s-\mbox{stab}}$ \cite{Meun11,Ziegler}
is the subgraph of $KG(n,k)$ induced by $[n]^k_s$.

In a celebrated result, Lov\'asz \cite{Lov78} proved that the chromatic number of $KG(n,k)$,
denoted $\chi(K(n,k))$, is equal to $n-2k+2$, verifying
a conjecture due to M. Kneser \cite{Kneser}.
After this result, Schrijver \cite{Sch} proved that the chromatic
number remains the same for $KG(n,k)_{2-\mbox{stab}}$. Moreover, this author showed that
$KG(n,k)_{2-\mbox{stab}}$ is $\chi$-critical. Due to these facts, the $2$-stable
Kneser graphs have been named \emph{Schrijver graphs}. These results were the base for
several papers devoted to Kneser graphs and stable Kneser graphs
(see e.g. \cite{Braun,LLT98,Meun11,Talbot,PTMV,Ziegler}). In addition,
it is well known that for $n \geq 2k+1$ the automorphism group of the Kneser graph
$KG(n,k)$ is isomorphic to $S_n$, the symmetric group of order $n$
(see \cite{GodRoy01} for a textbook account).

More recently, in 2010 Braun \cite{Braun} proved that
the automorphism group of the Schrijver graphs $KG(n,k)_{2-\mbox{stab}}$ is
isomorphic to the dihedral group of order $2n$, denoted $D_{2n}$. In this paper we generalize this result by proving
that the automorphism group of the $s$-stable Kneser graphs
is isomorphic to $D_{2n}$ for $n\geq sk+1$.

Firstly, notice that if $n=sk$, the $s$-stable Kneser graph
$KG(n,k)_{s-\mbox{stab}}$ is isomorphic to the complete graph on $s$ vertices and the
automorphism group of $KG(n,k)_{s-\mbox{stab}}$ is isomorphic to $S_s$.

From the definitions we have that $D_{2n}$ injects into $\mbox{Aut}(KG(n,k)_{s-\mbox{stab}})$, as $D_{2n}$ acts on
$KG(n,k)_{s-\mbox{stab}}$ by acting on $[n]$. Then, we have the following fact.

\begin{remark}\label{diedral}
$D_{2n}\subseteq \mbox{Aut}(KG(n,k)_{s-\mbox{stab}})$.
\end{remark}

In the sequel, the arithmetic operations are taken {\em modulo $n$} on
the set $[n]$ where $n$ represents the $0$. Let us recall an important result due to Talbot.

\begin{theorem}[Theorem 3 in \cite{Talbot}]\label{indep} Let $n,s,k$ be positive integers such that $n\geq sk$ and $s\geq 3$. Then,
every maximum independent set in $KG(n,k)_{s-\mbox{stab}}$ is of the form
$\mathcal{I}_i=\{I\in [n]^k_s: i\in I\}$ for a fixed $i\in [n]$.
\end{theorem}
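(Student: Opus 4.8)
The statement is an Erd\H{o}s--Ko--Rado (EKR) type theorem: an independent set in $KG(n,k)_{s-\mbox{stab}}$ is exactly a family of $s$-stable $k$-subsets that pairwise intersect, so the claim is that the largest such \emph{intersecting} family is a star $\mathcal{I}_i$, and only a star. The plan is to prove this in two stages: (i) the \emph{cardinality bound}, that every intersecting family has at most $|\mathcal{I}_i|$ members, and (ii) the \emph{uniqueness}, that equality forces the family to be a star. I would begin by pinning down the size of a star by a double count: every $s$-stable $k$-subset has $k$ elements and, by the rotational symmetry of $[n]$, each $i$ lies in the same number of them, whence $|\mathcal{I}_i|=\frac{k}{n}\,|[n]^k_s|$. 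Using the standard gap count $|[n]^k_s|=\frac{n}{k}\binom{n-k(s-1)-1}{k-1}$ this gives $|\mathcal{I}_i|=\binom{n-k(s-1)-1}{k-1}$, independent of $i$, which is the target extremal value; note that a naive ``gap-contraction'' bijection to ordinary $k$-subsets of a shorter cycle is of no help, since it destroys the intersection pattern.

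For stage (i) I would adapt Katona's cyclic (circle) method to the stable setting. The obstacle is that Katona averages over all cyclic orderings of the ground set, whereas here the cyclic structure of $[n]$ is fixed by the stability condition, so arbitrary re-orderings are not available. Instead I would work on the fixed cycle and prove a \emph{local} lemma: within any cyclic window of fixed length, the $s$-stable $k$-subsets that lie tightly in the window and pairwise intersect are bounded in number, a linear/interval analogue of the bound ``at most $k$ pairwise-intersecting consecutive sets''. A suitably weighted average of this local bound over the $n$ cyclic rotations of the window then yields the global inequality $|\mathcal{F}|\le\binom{n-k(s-1)-1}{k-1}$. Making the window--weight bookkeeping exact is the technical core here: unlike in Katona's setting, different $s$-stable sets may be realized tightly in different numbers of windows, so one must control this non-uniformity to recover a \emph{tight} bound with the fixed separation $s$ entering correctly.

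Stage (ii), the uniqueness, is where the hypotheses $s\geq 3$ and $n\geq sk$ do the real work, and I expect it to be the main obstacle. Here I would run a Hilton--Milner-type analysis: assume $\mathcal{F}$ is intersecting of maximum size but contained in no star, fix members whose intersection pattern witnesses the non-star structure, and bound $\mathcal{F}$ according to how its sets must meet this kernel, so as to show that any non-star intersecting family is \emph{strictly} smaller than $\binom{n-k(s-1)-1}{k-1}$; maximality then forces a common element $i$, i.e.\ $\mathcal{F}\subseteq\mathcal{I}_i$, and maximality of the star gives $\mathcal{F}=\mathcal{I}_i$. The separation condition is essential both to rule out the sporadic non-star extremal families that survive in the unconstrained EKR regime, and (through $s\geq 3$ together with $n\geq sk$) to guarantee enough room on the cycle for the counting estimates to be strict rather than merely tight; a compression argument using a separation-preserving shift operator is a plausible alternative route to the same conclusion. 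I would close by treating the boundary case $n=sk$ directly: there $[n]^k_s$ consists of the $s$ evenly spaced sets, which are pairwise disjoint and form a clique, so the maximum independent set is a single vertex, trivially of the form $\mathcal{I}_i$.
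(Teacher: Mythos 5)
First, note that the paper does not prove this statement at all: it is imported verbatim as Theorem~3 of Talbot's paper on intersecting families of separated sets, so there is no internal proof to measure your attempt against; the honest comparison is with Talbot's argument, which proceeds by compression (shifting) operators tailored to preserve $s$-separation, together with an induction on $n$ --- the route you mention only in passing as a ``plausible alternative.''

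More importantly, what you have written is a research plan rather than a proof, and the two places where you defer the work are exactly the two places where the theorem lives. In stage (i) the ``local lemma'' bounding the tightly-placed, pairwise-intersecting $s$-stable sets in a window is never stated precisely, and you yourself flag that different sets are realized tightly in different numbers of windows; controlling that non-uniformity so that the average comes out to exactly $\binom{n-k(s-1)-1}{k-1}$ is the entire difficulty of transplanting Katona's circle method to a fixed cycle, and nothing in the proposal shows it can be done. In stage (ii) no Hilton--Milner kernel is actually exhibited and no strict inequality is derived for a non-star family; asserting that $s\geq 3$ and $n\geq sk$ ``do the real work'' does not identify where they enter, and for $s=2$ the uniqueness genuinely fails in places, so some step of your argument must visibly break at $s=2$ --- the proposal never locates that step. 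The parts you do carry out (the size of a star by double counting, the degenerate case $n=sk$) are correct but peripheral. As it stands the proposal establishes neither the upper bound nor the uniqueness, so it cannot be accepted as a proof of the quoted theorem.
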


For $n\geq sk+1$ we observe that $\{i,i+s,i+2s,\ldots,i+(k-1)s\}$ and $\{i,i+s+1,i+2s+1,\ldots,i+(k-1)s+1\}$ belong to $[n]^k_s$ for all $i\in [n]$. Then, we can easily obtain the following fact. 

\begin{remark}\label{remindep}
Let $n\geq sk+1$ and $i,j\in [n]$. If $i\neq j$, then $\mathcal{I}_i\neq \mathcal{I}_j$.
\end{remark}

\section{Automorphism group of $KG(n,k)_{s-\mbox{stab}}$}\label{sec:auto}

This section is devoted to obtain the automorphism group of $KG(n,k)_{s-\mbox{stab}}$. To this end, let us introduce the following graph family. Let $n,s,k$ be positive integers such that $n\geq sk+1$. We define the graph $G(n,k,s)$ with vertex set
$[n]$ and two vertices $i,j\in [n]$ are adjacent if and only if it does not exist $S\in [n]^k_s$ such that $\{i,j\}\subseteq S$.
See examples in Figure \ref{examples}.

\begin{figure}[h]
 \centering
 \includegraphics[scale=0.15]{./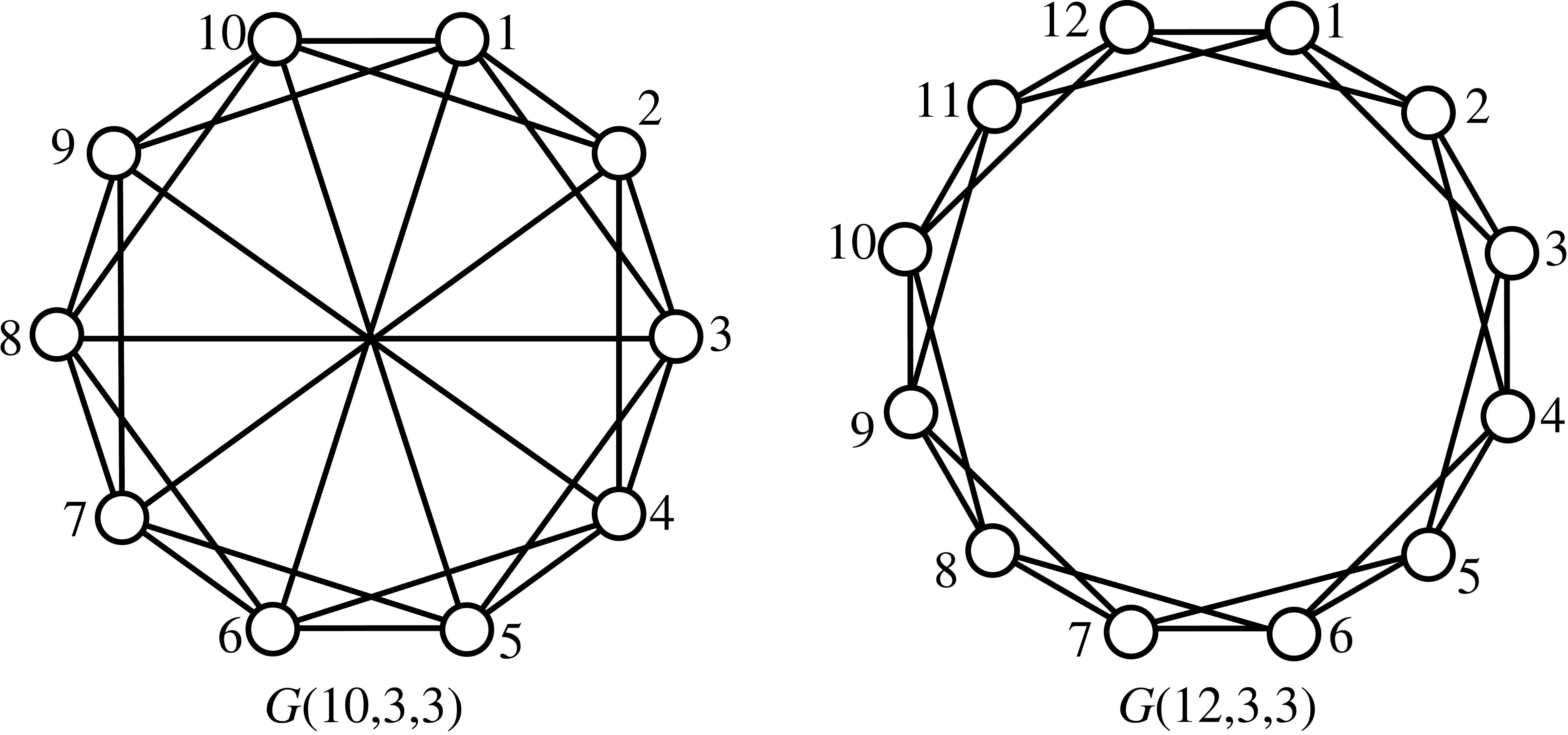}
 \caption{Examples of graphs $G(n,k,s)$.}
 \label{examples}
\end{figure}


Two vertices $i$, $j$ of $G(n,k,s)$ are \emph{consecutive} if $i=j+1$. Let us see a direct result about consecutive vertices and dihedral groups, which we will use in the following theorem.

\begin{remark}\label{consecutive}
An injective function $f:[n]\mapsto [n]$ sends consecutive vertices of $G(n,k,s)$ to consecutive vertices of $G(n,k,s)$
if and only if $f$ belongs to the dihedral group $D_{2n}$.
\end{remark}

Next, we obtain the main result of this section that states the link between the automorphism groups of $KG(n,k)_{s-\mbox{stab}}$ and $G(n,k,s)$.

\begin{theorem}\label{isogroup}
Let $n,s,k$ be positive integers such that $n\geq sk+1$ and $s\geq 3$. Then, the automorphism group of $KG(n,k)_{s-\mbox{stab}}$ is
isomorphic to the automorphism group of $G(n,k,s)$.
\end{theorem}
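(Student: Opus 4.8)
The plan is to realize both automorphism groups through their action on maximum independent sets and to show that the resulting forward map is a group isomorphism. For $\phi\in\mathrm{Aut}(KG(n,k)_{s\text{-stab}})$, $\phi$ permutes the maximum independent sets; by Theorem \ref{indep} every such set equals $\mathcal{I}_i$ for some $i\in[n]$, and by Remark \ref{remindep} the index $i$ is uniquely determined. Hence $\phi(\mathcal{I}_i)=\mathcal{I}_{\sigma_\phi(i)}$ defines a permutation $\sigma_\phi$ of $[n]$, and I set $\Phi(\phi)=\sigma_\phi$. The observation linking the two graphs is that, by the definition of $G(n,k,s)$, two vertices $i,j\in[n]$ are adjacent in $G(n,k,s)$ exactly when no $s$-stable $k$-set contains both, i.e. when $\mathcal{I}_i\cap\mathcal{I}_j=\emptyset$. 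Since $\phi$ is a bijection on $V(KG(n,k)_{s\text{-stab}})$ it commutes with intersection and preserves empty intersections, so $\mathcal{I}_i\cap\mathcal{I}_j=\emptyset$ holds iff $\mathcal{I}_{\sigma_\phi(i)}\cap\mathcal{I}_{\sigma_\phi(j)}=\emptyset$; this shows $\sigma_\phi\in\mathrm{Aut}(G(n,k,s))$, so $\Phi$ indeed maps $\mathrm{Aut}(KG(n,k)_{s\text{-stab}})$ into $\mathrm{Aut}(G(n,k,s))$.

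Next I would check that $\Phi$ is a group homomorphism and is injective. The homomorphism property is immediate from $\phi(\psi(\mathcal{I}_i))=\mathcal{I}_{\sigma_\phi(\sigma_\psi(i))}$, which gives $\sigma_{\phi\circ\psi}=\sigma_\phi\circ\sigma_\psi$. For injectivity, suppose $\sigma_\phi=\mathrm{id}$, so $\phi(\mathcal{I}_i)=\mathcal{I}_i$ for every $i$. For any vertex $S\in[n]^k_s$ one has $\bigcap_{i\in S}\mathcal{I}_i=\{T\in[n]^k_s: S\subseteq T\}=\{S\}$ because $|T|=|S|=k$. Applying the bijection $\phi$ and using $\phi(\mathcal{I}_i)=\mathcal{I}_i$ yields $\{\phi(S)\}=\phi\big(\bigcap_{i\in S}\mathcal{I}_i\big)=\bigcap_{i\in S}\mathcal{I}_i=\{S\}$, hence $\phi(S)=S$. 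Thus $\phi$ is the identity and $\Phi$ is injective.

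It remains to prove surjectivity, which I expect to be the main obstacle. Given $\tau\in\mathrm{Aut}(G(n,k,s))$, the natural candidate preimage is the elementwise map $\phi_\tau(S)=\{\tau(i):i\in S\}$; once we know $\phi_\tau$ sends $[n]^k_s$ into itself it is automatically a bijection preserving disjointness, hence an automorphism of $KG(n,k)_{s\text{-stab}}$ with $\Phi(\phi_\tau)=\tau$. The difficulty is precisely that an automorphism of $G(n,k,s)$ respects only the pairwise coexistence relation $\mathcal{I}_i\cap\mathcal{I}_j\neq\emptyset$, and this does not by itself force $\tau$ to preserve the $s$-stability of a whole $k$-set: two points at circular distance $1$ may still coexist in some $s$-stable $k$-set by using the long arc, so $s$-stability is not a pairwise-visible property in $G(n,k,s)$. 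To overcome this I would argue that $\tau$ must preserve the consecutiveness relation of $G(n,k,s)$, by first identifying ``$i$ and $i+1$ are consecutive'' as a purely structural invariant of $G(n,k,s)$ and showing that every automorphism carries consecutive pairs to consecutive pairs. By Remark \ref{consecutive} this forces $\tau\in D_{2n}$, and every element of $D_{2n}$ preserves circular distances and therefore maps $[n]^k_s$ onto itself. Together with Remark \ref{diedral}, which guarantees that the associated $\phi_\tau$ already belongs to $\mathrm{Aut}(KG(n,k)_{s\text{-stab}})$, this gives $\Phi(\phi_\tau)=\tau$ and completes the proof that $\Phi$ is an isomorphism. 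The crux of the whole argument is thus the reduction of a general automorphism of $G(n,k,s)$ to a consecutiveness-preserving map, where the circulant structure of $G(n,k,s)$ must be exploited.
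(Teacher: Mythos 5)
Your construction of the map $\Phi$, the verification that it lands in $\mathrm{Aut}(G(n,k,s))$, the homomorphism property, and the injectivity argument via $\bigcap_{i\in S}\mathcal{I}_i=\{S\}$ all match the paper's proof and are fine. The problem is surjectivity, which you correctly identify as the crux but then leave unproved: you propose to show that every $\tau\in\mathrm{Aut}(G(n,k,s))$ preserves consecutiveness and hence lies in $D_{2n}$. That statement is essentially the paper's Theorem \ref{bigm} (that $\mathrm{Aut}(G(n,k,s))=D_{2n}$), and proving it requires the full structural description of $G(n,k,s)$ (Theorem \ref{powerCm}) plus a careful count of common neighborhoods; none of that is supplied in your sketch, so as written the surjectivity step is a genuine gap.

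Moreover, the premise that pushes you onto this hard road is false. You claim that ``two points at circular distance $1$ may still coexist in some $s$-stable $k$-set by using the long arc,'' so that $s$-stability is not pairwise-visible in $G(n,k,s)$. But the definition of $s$-stable already takes the shorter arc into account: it requires $s\leq|i-j|\leq n-s$ for every pair, i.e.\ circular distance at least $s$, and consecutive points violate this for $s\geq 2$. Consequently, if a $k$-set $S$ is independent in $G(n,k,s)$, then every pair $i,j\in S$ lies in \emph{some} $T\in[n]^k_s$, hence satisfies $s\leq|i-j|\leq n-s$, hence $S$ itself is $s$-stable. Conversely every $S\in[n]^k_s$ is independent in $G(n,k,s)$ (witnessed by $S$ itself). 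So the $k$-element independent sets of $G(n,k,s)$ are exactly the vertices of $KG(n,k)_{s\text{-stab}}$; since any $\tau\in\mathrm{Aut}(G(n,k,s))$ preserves independent sets and their cardinality, the elementwise map $\phi_\tau$ does send $[n]^k_s$ bijectively to itself, preserves disjointness, and satisfies $\Phi(\phi_\tau)=\tau$. This one observation --- which is exactly how the paper handles surjectivity --- closes the gap with no appeal to consecutiveness or to $D_{2n}$, and keeps Theorem \ref{isogroup} logically prior to Theorem \ref{bigm} (whose proof in the paper uses Theorem \ref{isogroup}, so your route would also need care to avoid circularity).
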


\begin{proof}
As we have mentioned, given $i\in [n]$, Theorem \ref{indep} guarantees that the sets $\mathcal{I}_i$
are the maximum independent sets in $KG(n,k)_{s-\mbox{stab}}$. Besides, any automorphism of $KG(n,k)_{s-\mbox{stab}}$ send maximum independent sets into maximum independent sets, i.e. for each $\alpha\in \mbox{Aut}(KG(n,k)_{s-\mbox{stab}})$ and $i\in [n]$, $\alpha(\mathcal{I}_i)=\mathcal{I}_j$ for some $j\in [n]$. From Remark \ref{remindep}, if $i\neq j$ then $\alpha(\mathcal{I}_i)\neq\alpha(\mathcal{I}_j)$ and so $\alpha$ permutes these independent sets. Hence we define the homomorphism $\phi$ from $\mbox{Aut}(KG(n,k)_{s-\mbox{stab}})$ to $S_n$ such that

$$\phi(\alpha)(i)=j\Leftrightarrow \alpha(\mathcal I_i)=\mathcal{I}_j.$$ 

We will show that $\phi$ is injective and its image is $\mbox{Aut}(G(n,k,s))$.

Given a non-trivial element $\alpha\in \mbox{Aut}(KG(n,k)_{s-\mbox{stab}})$, there exists $S\in [n]^k_s$ such that $\alpha(S)\neq S$, i.e. there exists $j\in S$ such that $j\notin\alpha(S)$. It follows that $\alpha(S)\in \mathcal{I}_{\phi(\alpha)(j)}$, but $\alpha(S)\notin \mathcal I_{j}$, hence $\phi(\alpha)(j)\neq j$ and $\phi(\alpha)$ is non-trivial. Then, $\phi$ is injective.

Now, we first prove that $\mbox{Aut}(G(n,k,s))\subseteq \phi(\mbox{Aut}(KG(n,k)_{s-\mbox{stab}}))$. For each $\beta\in \mbox{Aut}(G(n,k,s))$ we define the function $\gamma:V(KG(n,k)_{s-\mbox{stab}})\mapsto V(KG(n,k)_{s-\mbox{stab}})$ such that for each $S=\{s_1,\ldots,s_k\}\in V(KG(n,k)_{s-\mbox{stab}})$, $\gamma(S)=\{\beta(s_1),\ldots,\beta(s_k)\}$.

Since $S$ is a stable set of $G(n,k,s)$, $\gamma(S)$ is also a stable set of $G(n,k,s)$ and $\gamma$ is well defined. It is not hard to see that $\gamma$ is bijective. Furthermore, $S$ and $S'$ are adjacent in $KG(n,k)_{s-\mbox{stab}}$ if and only if $\gamma(S)$ and $\gamma(S')$ are adjacent in $KG(n,k)_{s-\mbox{stab}}$. Therefore $\gamma\in \mbox{Aut}(KG(n,k)_{s-\mbox{stab}})$ and from definition $\phi(\gamma)=\beta$.

Let us prove that $\phi(\mbox{Aut}(KG(n,k)_{s-\mbox{stab}}))\subseteq \mbox{Aut}(G(n,k,s))$, i.e. $\phi(\alpha)$ is an automorphism of $G(n,k,s)$ for each $\alpha\in \mbox{Aut}(KG(n,k)_{s-\mbox{stab}})$. Let $i,j\in [n]$, $i'=\phi(\alpha)(i)$ and $j'=\phi(\alpha)(j)$. If $ij\in E(G(n,k,s))$, since $\mathcal I_{i}\cap\mathcal I_{j}=\emptyset$ and $\alpha$ is injective, $\mathcal I_{i'}\cap\mathcal I_{j'}=\emptyset$, i.e. $\nexists\ S\in V(KG(n,k)_{s-\mbox{stab}})$ such that $\{i',j'\}\subseteq S$. Thus $i'j'\in E(G(n,k,s))$.
Since $\phi(\alpha)$ is bijective, we conclude that $\phi(\alpha)\in \mbox{Aut}(G(n,k,s))$.

Therefore the image of $\phi$ is $\mbox{Aut}(G(n,k,s))$ and the proof is complete.
\end{proof}

This result allow us to obtain $\mbox{Aut}(KG(n,k)_{s-\mbox{stab}})$ from $\mbox{Aut}(G(n,k,s))$. Next section is devoted to analize the structure and the automorphism group of the graphs $G(n,k,s)$.

\subsection{The automorphism group of $G(n,k,s)$.}
\label{sec: ngreat}

Let $G$ be a simple graph. For a vertex $v\in V(G)$, the \emph{open neighborhood} of $v$ in $G$ is the set $N(v)=\{u\in V(G):uv\in E(G)\}$. Then, the \emph{closed neighborhood} of $v$ in $G$ is $N[v]=N(v)\cup\{v\}$. The \emph{degree} of a vertex $v\in V(G)$ is $\mbox{deg}(v)=|N(v)|$. For any positive integer $d$, we denote by $G^d$ the \emph{$d$-th power} of $G$, i.e. the graph with the same vertex set $V(G)$ and such that two vertices $u$, $v$
are adjacent if and only if $\mbox{dist}_G(u,v)\leq d$, where $\mbox{dist}_G(u,v)$ is the {\it distance} between $u$ and $v$ in $G$, i.e. the length of the shortest path in $G$ from $u$ to $v$. We denote by $C_n$ the \emph{$n$-cycle graph} with vertex set $[n]$ and edge set $\{ij:\ i,j\in [n],\ j=i+1\}$.

\begin{theorem}\label{powerCm}
Let $n,s,k$ be positive integers such that $n\geq sk+1$ and $s\geq 3$. Then,
\begin{enumerate}
 \item if $s(k+1)-1\leq n$, then $G(n,k,s)$ is isomorphic to $C^{s-1}_n$, and
\item if $sk+1\leq n\leq s(k+1)-2$. Then $G(n,k,s)$ is the graph on $[n]$ and edges defined as follows:
$$ij\in E(G(n,k,s))\Leftrightarrow i\neq j,\ |j-i|\notin\bigcup_{d=1}^{k-1}\{ds,ds+1,\dots,ds+r\},$$
where $r=n-sk$.
\end{enumerate}
\end{theorem}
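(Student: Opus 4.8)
The plan is to characterize adjacency in $G(n,k,s)$ directly from the definition: two vertices $i,j$ are adjacent exactly when no $s$-stable $k$-subset contains both of them. By the rotational symmetry built into the modular arithmetic, adjacency depends only on the circular distance between $i$ and $j$, so I would fix $i$ and $j$ and set $\delta=|j-i|$ (taken as the circular distance, i.e.\ $\min\{|j-i|,n-|j-i|\}$, which ranges over $1,\dots,\lfloor n/2\rfloor$). The core combinatorial question then becomes: for which values of $\delta$ does there exist an $s$-stable $k$-set containing two points at circular distance $\delta$? The pair $\{i,j\}$ extends to an $s$-stable $k$-set if and only if we can place the remaining $k-2$ points so that the whole configuration respects the stability constraint of gap at least $s$ (and at most $n-s$) between consecutive chosen points around the cycle.

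First I would reduce to a packing count. Placing $k$ points on the $n$-cycle with all circular gaps at least $s$ is possible precisely when $n\geq sk$, and the "slack" available is $r:=n-sk$. Having fixed two of the points at circular distance $\delta$, the cycle is split by these two points into two arcs; I would count how the slack $r$ can be distributed among the $k$ gaps subject to each gap being at least $s$, with the constraint that the arc of length $\delta$ must itself accommodate some number $d$ of gaps each of size at least $s$. This means $\delta$ is realizable as the distance spanned by $d$ consecutive gaps when $ds\leq \delta$ and the complementary arc of length $n-\delta$ can hold the remaining $k-d$ gaps, i.e.\ $(k-d)s\leq n-\delta$. Combining these two inequalities and ranging over the admissible $d\in\{1,\dots,k-1\}$ yields exactly which $\delta$ are "coverable" by some stable $k$-set, and the non-coverable $\delta$ are precisely the adjacencies of $G(n,k,s)$.

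Carrying out this count, a distance $\delta$ is coverable by a stable $k$-set with $d$ gaps inside the $\delta$-arc exactly when $ds\leq\delta\leq ds+r$ (the upper bound coming from $(k-d)s\leq n-\delta$, which rearranges to $\delta\leq n-(k-d)s=sk+r-(k-d)s=ds+r$). Hence the set of coverable distances is $\bigcup_{d=1}^{k-1}\{ds,ds+1,\dots,ds+r\}$, and a pair is adjacent in $G(n,k,s)$ iff its distance avoids this union. This immediately gives statement (2). For statement (1), when $s(k+1)-1\leq n$ we have $r=n-sk\geq s-1$, so the intervals $\{ds,\dots,ds+r\}$ for consecutive $d$ overlap or abut: since $(d+1)s\leq ds+r+1$ is equivalent to $s\leq r+1$, the union of these intervals becomes the single block $\{s,s+1,\dots,(k-1)s+r\}=\{s,\dots,n-s\}$ (using $(k-1)s+r=n-s$). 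Therefore the coverable distances are exactly those with $s\leq\delta\leq n-s$, so the adjacent pairs are precisely those at circular distance in $\{1,\dots,s-1\}$, which is exactly the adjacency relation of $C_n^{s-1}$.

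The main obstacle I anticipate is the careful handling of the distance-versus-gap bookkeeping on the cycle: I must verify that the two stated inequalities $ds\leq\delta$ and $(k-d)s\leq n-\delta$ are not merely necessary but jointly sufficient for realizability, which requires exhibiting an explicit stable $k$-set (placing $d$ gaps, each of size $\geq s$ summing to $\delta$, inside one arc and $k-d$ gaps summing to $n-\delta$ inside the other) and confirming all gaps simultaneously meet the lower bound $s$ and the implicit upper bound $n-s$. I would also need to confirm the edge cases where $\delta$ equals $\lfloor n/2\rfloor$ or where the union in case (1) exactly reaches the endpoints $s$ and $n-s$, checking that $\delta=s-1$ is genuinely not coverable (which holds since the smallest coverable distance is $s$, attained at $d=1$). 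Once the realizability equivalence is pinned down, both statements follow by the interval-union computation above.
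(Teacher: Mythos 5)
Your proposal is correct, and it reorganizes the argument around a single key lemma that the paper does not state: a circular distance $\delta$ between two chosen points is realizable inside an $s$-stable $k$-set if and only if some $d\in\{1,\dots,k-1\}$ satisfies $ds\leq\delta\leq ds+r$, obtained by counting how many of the $k$ cyclic gaps fall in each of the two arcs determined by the pair. Both cases of the theorem then drop out of one interval-union computation, with case (1) being exactly the regime $r\geq s-1$ in which consecutive intervals $\{ds,\dots,ds+r\}$ abut and merge into $\{s,\dots,n-s\}$. The paper instead treats the two cases separately: in case (1) it exhibits, for each $i$ in the claimed non-neighborhood of $1$, an explicit stable set $S_i$ built from arithmetic progressions of step $s$ (with a subcase split on $t=\lfloor(i-1)/s\rfloor$), and in case (2) it combines the explicit sets $S_p=\{1,p+s,\dots,p+(k-1)s\}$ with a floor-function bound $|W\cap[h-1]|+|W\cap\{h,\dots,n\}|\leq\lfloor(h-1)/s\rfloor+\lfloor(n-h+1)/s\rfloor\leq k-1$ for the non-existence direction. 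The underlying combinatorial content is the same (your sufficiency construction is the paper's equally-spaced set; your necessity count is the paper's floor bound), but your packaging is more uniform and makes the transition between the two regimes transparent, at the cost of having to be careful that the two arc inequalities are jointly sufficient (which your explicit gap assignment $s,\dots,s,\delta-(d-1)s$ and $s,\dots,s,n-\delta-(k-d-1)s$ does settle, since pairwise circular distances of $k\geq2$ points are automatically at least $s$ once all cyclic gaps are). The only point worth adding when you write this up is the observation that the forbidden set $\bigcup_{d=1}^{k-1}\{ds,\dots,ds+r\}$ is invariant under $\delta\mapsto n-\delta$ (via $d\mapsto k-d$, $t\mapsto r-t$), so your circular-distance formulation agrees with the statement's use of $|j-i|$.
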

\begin{proof}
From the symmetry of $G(n,k,s)$ (see Remark \ref{diedral}), to prove this result it is enough to obtain the open/closed neighborhood of vertex $1$ in $G(n,k,s)$ for each case.
\begin{enumerate}
 \item {\bf Case $s(k+1)-1\leq n$}: We have to prove that $[n]\setminus N[1]=\{s+1,\ldots,n-s+1\}$.
 By definitions, $i\in N(1)$ for every $i\in\{2,\ldots,s\}\cup\{n-s+2,\ldots,n\}$. We only need to prove that for all $i\in\{s+1,\dots,n-s+1\}$ there exists $S_{i}\in [n]^k_s$ such that $\{1,i\}\subseteq S_{i}$. So, let $i\in\{s+1,\dots,n-s+1\}$ and $t=\left\lfloor \frac{i-1}{s}\right\rfloor$.

If $t\geq k-1$, let $S_{i}=\{1,1+s,\ldots, 1+(k-2)s,i\}$. Then $S_i\in [n]^k_s$, since $s+1\leq i\leq n-s+1$ and $i-(1+(k-2)s)\geq i-(1+(t-1)s)\geq i-(1+(\frac{i-1}{s}-1)s)=s$.

If $t\leq k-2$, let $S_{i}=\{1, 1+s,\ldots, 1+(t-1)s,i,i+s,\ldots,i+(k-t-1)s\}$. To prove that $S_i\in [n]^k_s$ it is enough to show that $i-(1+(t-1)s)\geq s$ and $n-(i+(k-t-1)s)\geq s-1$. The first inequality trivially holds. To see the second inequality, notice that $$i-(1+(t-1)s)=i-1+s-s\left\lfloor \frac{i-1}{s}\right\rfloor< i-1+s-s\left(\frac{i-1}{s}-1\right)=2s.$$

Then, $i-(1+(t-1)s)\leq 2s-1$. Therefore $n-(i+(k-t-1)s)=n-(i-1-(t-1)s+(k-2)s+1)=n-(i-(1+(t-1)s))-((k-2)s+1)\geq
n-(2s-1)-((k-2)s+1)=n-2s+1-(k-2)s-1=n-sk\geq s-1$.

\item {\bf Case $sk+1\leq n\leq s(k+1)-2$}: Let $F_d=\{1+ds,1+ds+1,\dots,1+ds+r\}$ for $d\in [k-1]$ and $F=\bigcup_{d=1}^{k-1}F_d$.
We will prove that $N[1]=[n]-F$, which implies that $1$ and $j$ are adjacent in $G(n,k,s)$ if and only if $j-1\notin\bigcup_{d=1}^{k-1}\{ds,ds+1,\dots,ds+r\}$, as required.

Firstly, since $n-(r+1+(k-1)s)=s-1$, the set $S_p=\{1,p+s,p+2s,\dots,p+(k-1)s\}\in [n]_s^k$ for all $p\in [r+1]$. Furthermore, $\{1\}\cup F=\bigcup_{p=1}^{r+1}S_p$ and then $N[1]\subseteq [n]-F$.

To see the converse inclusion, observe first that if $h\in [s]\cup \{n-s+2,\dots,n\}$ then $h\in N[1]$ from definition of $G(n,k,s)$.

Hence, if $k=2$ we have finished.

Now, let $k\geq3$ (see Figure \ref{neigh}). We have that $$[n]\setminus\left(F\cup [s]\cup \{n-s+2,\dots,n\}\right)=\bigcup_{m=1}^{k-2}\{ms+2+r,\dots,(m+1)s\}.$$

Let $h\in\bigcup_{m=1}^{k-2}\{ms+2+r,\dots,(m+1)s\}$. We will prove that it does not exist $S\in [n]^k_s$ such that $\{1,h\}\subseteq S$. Let $W$ be an $s$-stable set of $[n]$ such that $\{1,h\}\subseteq W$. Notice that $|W\cap [h-1]|\leq\left\lfloor \frac{h-1}{s}\right\rfloor$ and $$|W\cap \{h,\dots,n\}|\leq\left\lfloor \frac{n-h+1}{s}\right\rfloor.$$

Consider $m'\in [k-2]$ such that $h\in\{m's+2+r,\dots,(m'+1)s\}$. Then,
\begin{itemize}
\item
$\left\lfloor \frac{h-1}{s}\right\rfloor\leq\left\lfloor \frac{(m'+1)s-1}{s}\right\rfloor =m'$.
\item
$\left\lfloor \frac{n-h+1}{s}\right\rfloor\leq\left\lfloor \frac{n-(m's+2+r)+1}{s}\right\rfloor \leq
\left\lfloor \frac{n-r-1}{s}\right\rfloor -m' = \left\lfloor \frac{n-n+sk-1}{s}\right\rfloor -m'=k-1-m'$.
\end{itemize}

Thus, $|W|\leq\left\lfloor \frac{h-1}{s}\right\rfloor + \left\lfloor \frac{n-h+1}{s}\right\rfloor\leq k-1$. Therefore, any $s$-stable set of $[n]$ containing the set $\{1,h\}$ has cardinality at most $k-1$, i.e. it does not exist $S\in [n]^k_s$ such that $\{1,h\}\subseteq S$. Hence $h\in N[1]$ and the result follows.
\end{enumerate}
\end{proof}

 \begin{figure}[h]
  \centering
  \includegraphics[scale=0.15]{./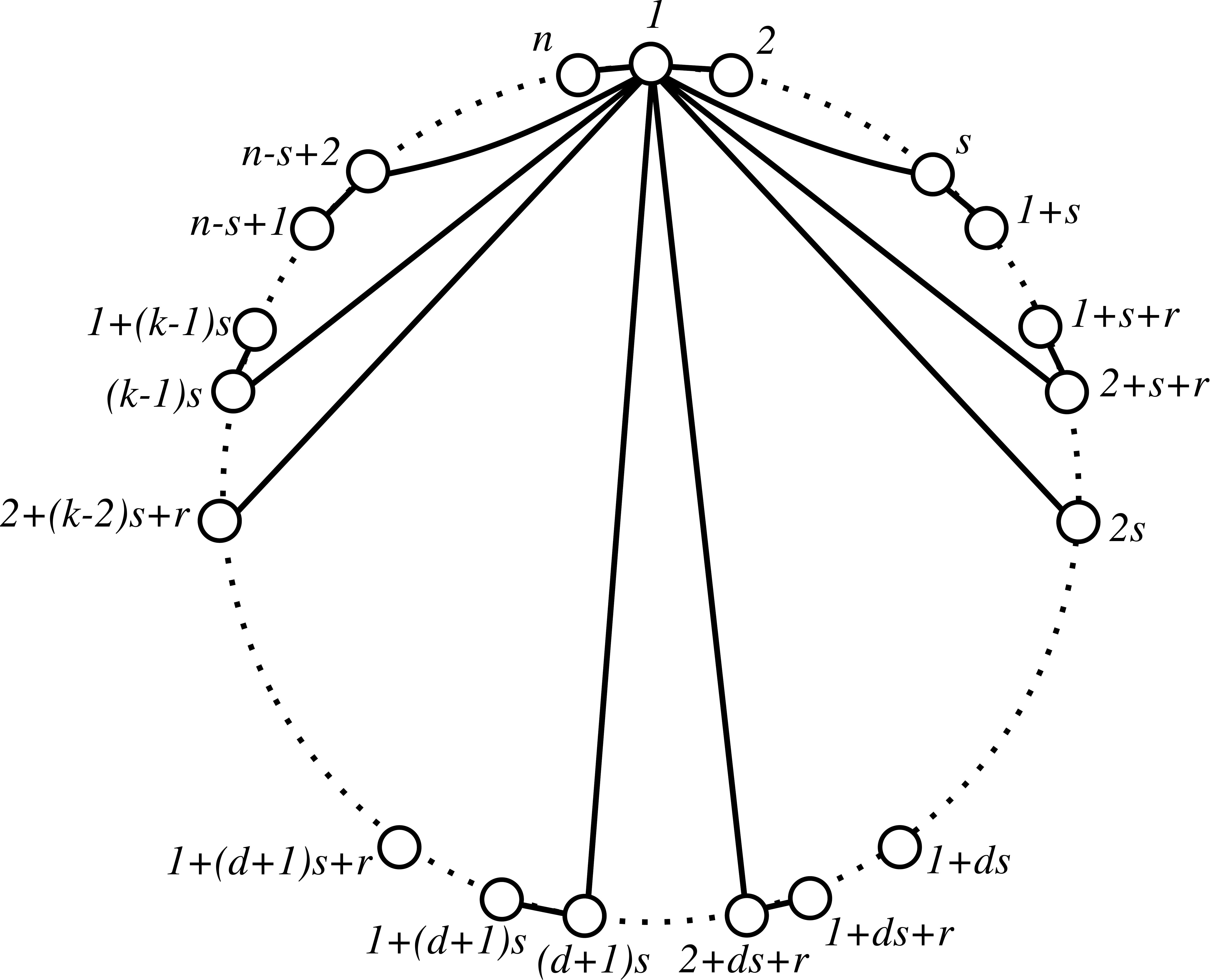}
  \caption{Neighborhood of vertex $1$ in $G(n,k,s)$.}
  \label{neigh}
 \end{figure}

In order to obtain $\mbox{Aut}(G(n,k,s))$, let us recall a well known result on automorphism group (see, e.g. \cite{PTMV}).

\begin{remark}\label{autopot}
Let $m$ and $q$ be positive integers such that $m\geq2q+3$. Then, the automorphism group of $C_m^q$ is the dihedral
group $D_{2m}$.
\end{remark}

Let $x$ be the degree of the vertices in $G(n,k,s)$ (which is a regular graph). Then, we have the following result.

\begin{theorem}
\label{bigm}
Let $n,s,k$ be positive integers such that $n\geq sk+1$ and $s\geq 3$ . Then, the automorphism group of $G(n,k,s)$ is the dihedral group $D_{2n}$.
\end{theorem}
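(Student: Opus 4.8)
The plan is to treat separately the two regimes of Theorem~\ref{powerCm}, which together exhaust all $n\geq sk+1$ (indeed $s(k+1)-2\geq sk+1$ since $s\geq3$). In the regime $s(k+1)-1\leq n$, Theorem~\ref{powerCm} gives $G(n,k,s)\cong C_n^{\,s-1}$, so I would simply apply Remark~\ref{autopot} with $m=n$ and $q=s-1$: its hypothesis $n\geq 2(s-1)+3=2s+1$ holds because $n\geq sk+1\geq 2s+1$ (as $k\geq 2$). Hence $\mathrm{Aut}(G(n,k,s))=\mathrm{Aut}(C_n^{\,s-1})=D_{2n}$, and this regime is settled with no further work.

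The core of the argument is the regime $sk+1\leq n\leq s(k+1)-2$. Since $D_{2n}\subseteq\mathrm{Aut}(G(n,k,s))$ by Remark~\ref{diedral}, it suffices to prove the reverse inclusion, and by Remark~\ref{consecutive} this amounts to showing that every $\alpha\in\mathrm{Aut}(G(n,k,s))$ sends consecutive vertices to consecutive vertices. I would obtain this from a description of the maximum cliques. Writing $r=n-sk\ (\geq 1)$ and $\Delta=\bigcup_{d=1}^{k-1}\{ds,\dots,ds+r\}$, Theorem~\ref{powerCm}(2) says that a clique of $G(n,k,s)$ is precisely a subset of $[n]$ all of whose pairwise differences avoid $\Delta$. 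The key lemma I would establish is: \emph{the clique number of $G(n,k,s)$ equals $s$, and its maximum cliques are exactly the $n$ runs $R_i=\{i,i+1,\dots,i+s-1\}$, $i\in[n]$.}

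To prove the lemma, the runs are clearly cliques (their pairwise differences lie in $\{1,\dots,s-1\}$, which is disjoint from $\Delta$). Conversely, given a clique $C$ I would cut the cycle at a largest gap, writing $C=\{0=c_0<\cdots<c_{m-1}\}$ with all forward differences lying in $\{1,\dots,c_{m-1}\}\setminus\Delta$. One first checks the largest gap exceeds $r$: otherwise all gaps are $\leq r<s$, and then the smallest element $\geq s$ would fall in $\{s,\dots,s+r-1\}\subseteq\Delta$, contradicting that its difference from $c_0=0$ avoids $\Delta$. Consequently $c_{m-1}<ks$, so two elements sharing a residue mod $s$ would differ by $ds$ with $1\leq d\leq k-1$, i.e.\ by an element of $\Delta$; hence the residues mod $s$ are pairwise distinct and $m\leq s$. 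If $m=s$ each residue occurs exactly once; the elements of residue $\leq r$ can lie neither in the middle range $\{s,\dots,n-s\}$ (they would land in a block of $\Delta$) nor above $n-s$ (excluded by $c_{m-1}<ks$), so they must be exactly $\{0,1,\dots,r\}$, and a short induction using $ds+1\in\Delta$ for $1\leq d\leq k-1$ then forces the element of each residue $\rho=r+1,\dots,s-1$ to equal $\rho$. Thus $C=\{0,\dots,s-1\}$ is a run.

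With the lemma in hand the conclusion is immediate: any automorphism permutes the maximum cliques and therefore preserves, for every pair $u,v$, the number of maximum cliques containing both. Two vertices at circular distance $t$ lie in exactly $\max(s-t,0)$ common runs, a quantity attaining its maximal value $s-1$ precisely when $t=1$; hence the consecutive pairs are characterized graph-theoretically as the pairs lying in the maximum number of maximum cliques. So every automorphism preserves consecutiveness and, by Remark~\ref{consecutive}, belongs to $D_{2n}$; combined with Remark~\ref{diedral} this yields $\mathrm{Aut}(G(n,k,s))=D_{2n}$. I expect the clique lemma — and inside it the rigidity statement that the \emph{only} maximum cliques are runs — to be the main obstacle, because $G(n,k,s)$ genuinely contains spread-out cliques (for instance sets with common difference $s-1$) that must be shown never to reach size $s$.
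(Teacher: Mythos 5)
Your proposal is correct, and in the regime $sk+1\leq n\leq s(k+1)-2$ it takes a genuinely different route from the paper. Both arguments reduce, via Remarks \ref{diedral} and \ref{consecutive}, to showing that every automorphism preserves consecutiveness, and both handle the regime $n\geq s(k+1)-1$ identically through $C_n^{s-1}$ and Remark \ref{autopot}. But the invariant you use to detect consecutive pairs is different: the paper computes closed-neighborhood intersections, showing $|N[1]\cap N[2]|=|N[1]\cap N[n]|=x-k+2$ while $|N[1]\cap N[i]|\leq x-k+1$ for every other neighbor $i$ of $1$, which requires fairly delicate bookkeeping of how the blocks $\{1+ds,\ldots,1+ds+r\}$ meet $N[i]$; you instead prove a structural rigidity lemma --- the maximum cliques are exactly the $n$ arcs $R_i=\{i,\ldots,i+s-1\}$ --- and then observe that a pair at circular distance $t$ lies in $\max(s-t,0)$ of them, uniquely maximized at $t=1$. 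Your clique lemma checks out: the largest-gap normalization forces $c_{m-1}<sk$, the residue-class argument bounds the clique number by $s$, and the induction pinning down an $s$-clique to $\{0,\ldots,s-1\}$ works because $ds+\rho\in\Delta$ for $0\leq\rho\leq r$ and $ds+r\in\Delta$ for $1\leq d\leq k-1$ (do note, when writing this up, that reducing circular differences to integer differences uses the symmetry of $\Delta$ under $x\mapsto n-x$, which holds since $n-ds=(k-d)s+r$). The trade-off is that your approach front-loads the work into a clean combinatorial statement of independent interest (and one that visibly fails to extend verbatim to the regime $r\geq s-1$, where the block description of the edge set changes --- so your restriction of the lemma to the second regime is the right call), whereas the paper's computation is more local and avoids any global claim about cliques at the cost of a less transparent case analysis.
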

\begin{proof}
Firstly, observe that if $s(k+1)-1\leq n$ the result immediately follows from Case 1 of Theorem \ref{powerCm} and Remark \ref{autopot}.
Let us consider $sk+1\leq n\leq s(k+1)-2$.
From Remark \ref{consecutive} we only need to prove that any $\alpha\in \mbox{Aut}(G(n,k,s))$ sends consecutive vertices to consecutive vertices. Moreover, by Remark \ref{diedral} and Theorem \ref{isogroup} it is enough to show that $\alpha(1)$ and $\alpha(2)$ are consecutive vertices. Without loss of generality we consider $\alpha(1)=1$.

Let $r=n-sk$. From Theorem \ref{powerCm}, $$N[2]\cap\{1+ds,\ldots,1+ds+r\}=\{1+ds\}$$ for $d=1,\ldots,k-1$, and $$[n]\setminus N[1]=\bigcup_{d=1}^{k-1}\{1+ds,1+ds+1,\dots,1+ds+r\}.$$ So, $$|N[1]\cap N[2]|=x+1-(k-1)=x-k+2.$$ Analogously, $|N[1]\cap N[n]|=x-k+2$.

Let $i\in N(1)$. Recall that $$N(1)=\{2,\ldots,s\}\cup\{n-s+2,\ldots,n\}\cup\left(\bigcup_{m=1}^{k-2}\{ms+2+r,\dots,(m+1)s\}\right).$$

If $i\in\{3,\ldots,s\}$ we have that $\{1+s,2+s\}\subseteq N[i]$. Besides, if $k\geq3$ observe that $\{i+1+(d-1)s+r,\ldots,i+ds-1\}\subseteq N[i]$ for $d=2,\ldots,k-1$. Hence, since $3\leq i\leq s$, $1+ds\leq i+ds-1$ and $i+1+(d-1)s+r\leq 1+ds+r$. Therefore, $\{i+1+(d-1)s+r,\ldots,i+ds-1\}\cap\{1+ds,\ldots,1+ds+r\}\neq\emptyset$ for $d=2,\ldots,k-1$. Then, $|N[i]\cap\{1+s,\ldots,1+s+r\}|\geq2$ and $|N[i]\cap\{1+ds,\ldots,1+ds+r\}|\geq1$ for $d=2,\ldots,k-1$. So, if $k\geq2$, $|N[1]\cap N[i]|\leq x+1-k\leq x-1$ and thus $\alpha(2)\neq i$.
Similarly if $i\in\{n-s+2,\ldots,n-1\}$, we have $\alpha(2)\neq i$. So, if $k=2$ the result follows.

Now, let $k\geq3$. Consider $i\in\bigcup_{m=1}^{k-2}\{ms+2+r,\dots,(m+1)s\}$ and let $m_i\in\{1,\ldots,k-2\}$ such that $i\in\{m_is+2+r,\ldots,(m_i+1)s\}$.

Notice that
\begin{equation}\label{eq1}
\{1+m_is,\ldots,1+m_is+r\}\cup\{1+(m_i+1)s,\ldots,1+(m_i+1)s+r\}\subseteq\{i-(s-1),\ldots,i+s-1\}\subseteq N[i].
\end{equation}

Therefore, $$\{1+m_is,\ldots,1+m_is+r\}\cup\{1+(m_i+1)s,\ldots,1+(m_i+1)s+r\}\subseteq N[i]\setminus N[1].$$

Now, let $m\in [k-2]$. If $m<m_i$ then $1+(m_i-m)s+r\leq i-(1+ms)\leq (m_i+1-m)s-1$. From Theorem \ref{powerCm}, we have that $1+ms\in N[i]$ if $m<m_i$. By a similar reasoning we have that $1+ms+r\in N[i]$ if $m>m_i+1$. Then,
$$\{1+ms:\ m<m_i,\ m\in [k]\}\cup\{1+ms+r:\ m>m_i+1,\ m\in [k]\}\subseteq N[i]\setminus N[1].$$

These facts togheter with (\ref{eq1}) imply that $$|N[1]\cap N[i]|=x+1-(N[i]\setminus N[1])\leq x+1-(2(r+1)+(k-4))\leq x-k+1.$$ Thus $\alpha(2)\neq i$. Therefore $\alpha(2)\in\{2,n\}$ and the thesis holds.
\end{proof}

Finally, we have the main result of this work.

\begin{theorem}\label{MM}
Let $n,s,k$ be positive integers such that $n\geq sk+1$ and $s\geq 2$ . Then, the automorphism group of $KG(n,k)_{s-\mbox{stab}}$ is isomorphic to the dihedral group $D_{2n}$.
\end{theorem}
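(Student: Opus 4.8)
The plan is to prove Theorem \ref{MM} by reducing it to results already in hand, splitting the argument according to the value of $s$. The statement allows $s\geq 2$, but the entire structural machinery developed in this section --- in particular Talbot's characterization of maximum independent sets (Theorem \ref{indep}) that underlies Theorem \ref{isogroup} --- requires $s\geq 3$. So the two regimes $s=2$ and $s\geq 3$ must be treated separately, and the only genuine content of the final theorem is to stitch them together.

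First I would dispose of the case $s=2$. Here $KG(n,k)_{2-\mbox{stab}}$ is precisely the Schrijver graph, and Braun \cite{Braun} established that for $n\geq 2k+1$ its automorphism group is isomorphic to $D_{2n}$. Since the hypothesis $n\geq sk+1$ with $s=2$ reads $n\geq 2k+1$, Braun's result applies verbatim and yields the claim in this case with no further work.

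Next I would handle the case $s\geq 3$, which is where the preceding theorems do all the heavy lifting. By Theorem \ref{isogroup}, under the hypotheses $n\geq sk+1$ and $s\geq 3$ we have $\mbox{Aut}(KG(n,k)_{s-\mbox{stab}})\cong \mbox{Aut}(G(n,k,s))$. By Theorem \ref{bigm}, under the same hypotheses $\mbox{Aut}(G(n,k,s))=D_{2n}$. Composing these two facts gives $\mbox{Aut}(KG(n,k)_{s-\mbox{stab}})\cong D_{2n}$, completing this case.

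Combining the two cases covers all $s\geq 2$ with $n\geq sk+1$, which is exactly the range asserted, and finishes the proof. I do not expect any real obstacle here: the hard analysis is entirely absorbed into Theorems \ref{isogroup} and \ref{bigm} (and into Braun's theorem for $s=2$). The only point requiring care is to remember that the $s=2$ case cannot be routed through the graph $G(n,k,s)$, since Theorem \ref{indep} --- and hence the isomorphism of Theorem \ref{isogroup} --- is only available for $s\geq 3$; this is precisely why a separate appeal to Braun is needed.
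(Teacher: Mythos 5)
Your proposal is correct and follows exactly the same route as the paper: the case $s=2$ is delegated to Braun's theorem, and the case $s\geq 3$ is obtained by combining Theorems \ref{isogroup} and \ref{bigm}. Your additional remark explaining why the $s=2$ case cannot be routed through $G(n,k,s)$ is a sensible clarification but does not change the argument.
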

\begin{proof}
 The result for the case $s=2$ follows from \cite{Braun} and for the remaining cases
 can be obtained from Theorems \ref{isogroup} and \ref{bigm}.
\end{proof}

\section{Further results}
\label{sec:MoreResults}
In this section we will obtain some properties of $s$-stable Kneser graphs as a consequence of the results in the previous sections. Firstly, as a consequence of Theorem \ref{MM}, we have the following result.

\begin{theorem}\label{vertextrans}
Let $n,k,s\geq2$ with $n\geq sk+1$. Then, $KG(n,k)_{s-\mbox{stab}}$ is vertex transitive if and only if $n=sk+1$.
\end{theorem}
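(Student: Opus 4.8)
The goal is to characterize exactly when $KG(n,k)_{s-\mbox{stab}}$ is vertex transitive, given that by Theorem \ref{MM} its automorphism group is precisely $D_{2n}$ (acting on vertices via its natural action on $[n]$). The plan is to turn vertex transitivity into a purely combinatorial orbit-counting question about how $D_{2n}$ moves the $s$-stable $k$-subsets around.

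First I would note that since $\mbox{Aut}(KG(n,k)_{s-\mbox{stab}})\cong D_{2n}$ and this isomorphism is realized by the geometric action of $D_{2n}$ on $[n]$ (rotations $i\mapsto i+1$ and reflections), a graph automorphism is vertex transitive if and only if this $D_{2n}$-action on the vertex set $[n]^k_s$ is transitive. The group $D_{2n}$ has order $2n$, so by the orbit--stabilizer theorem transitivity forces $|[n]^k_s|$ to divide $2n$; more precisely, transitivity holds if and only if there is a single orbit, equivalently $|[n]^k_s|\le 2n$ together with a connectivity argument, or most cleanly, if and only if every $s$-stable $k$-subset can be mapped to a fixed reference subset by some element of $D_{2n}$.

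For the easy direction ($n=sk+1$ implies vertex transitive), I would exhibit an explicit $s$-stable $k$-subset and show that rotating it through the $n$ cyclic shifts already sweeps out all of $[n]^k_s$. When $n=sk+1$ the stability constraint is so tight that the gaps between consecutive chosen elements (in cyclic order) are forced to be essentially all equal to $s$ with exactly one gap equal to $s+1$; this rigidity should let me count $|[n]^k_s|=n$ directly and verify that the $n$ rotations act simply transitively on them, giving vertex transitivity. For the converse, I would suppose $n\ge sk+2$ and produce two $s$-stable $k$-subsets lying in distinct $D_{2n}$-orbits, for instance by finding two subsets whose multisets of cyclic gaps differ, since any element of $D_{2n}$ preserves the cyclic gap structure (up to rotation and reversal). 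The slack $r=n-sk\ge 2$ provides enough room to place the extra spacing in two combinatorially inequivalent ways.

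The main obstacle I anticipate is the converse direction: I must verify that when $n\ge sk+2$ the cyclic-gap multiset genuinely admits at least two configurations that are inequivalent under rotation and reflection, and then argue carefully that no dihedral element can identify them. The cleanest formalization is that a $D_{2n}$-orbit of an $s$-stable $k$-subset is determined by the cyclic sequence of gaps up to rotation and reflection; since each gap is at least $s$ and the gaps sum to $n$, the number of orbits equals the number of such necklaces, and this count exceeds $1$ precisely when $r\ge 2$. Pinning down this necklace-counting statement and connecting the orbit count back to $|[n]^k_s|/(2n)$ via Burnside or a direct bijection is where the real work lies; alternatively, a direct size comparison showing $|[n]^k_s| > 2n$ whenever $n\ge sk+2$ would suffice and may be the more economical route, so I would compute $|[n]^k_s|$ as a function of $n,k,s$ and check the inequality.
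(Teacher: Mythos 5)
Your primary plan is essentially the paper's proof: use Theorem \ref{MM} to identify $\mbox{Aut}(KG(n,k)_{s-\mbox{stab}})$ with the natural $D_{2n}$-action on $[n]$, observe that this action preserves the cyclic gap multiset of a vertex (up to rotation and reversal of the indices), exhibit for $n\geq sk+2$ two vertices with different gap multisets (the paper takes $\{1,1+s,\ldots,1+(k-1)s\}$, all gaps $s$ except one of size $s+r\geq s+2$, versus $\{1,2+s,\ldots,2+(k-1)s\}$, which has a gap equal to $s+1$), and for $n=sk+1$ use the rigidity forcing every vertex to have $k-1$ gaps equal to $s$ and one equal to $s+1$, so that rotations act transitively. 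One caution: the ``more economical route'' you float at the end, namely showing $|[n]^k_s|>2n$ whenever $n\geq sk+2$, is false in general --- for $n=6$, $k=s=2$ one has $|[6]^2_2|=9<12=2n$ --- so that shortcut cannot replace the gap-multiset argument (orbit--stabilizer divisibility would still rescue this particular case, but the clean uniform argument is the one via gaps). Carry out the gap-multiset version and the proof matches the paper's.
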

\begin{proof}
Without loss of generality, we assume that any vertex $S=\{s_1,s_2,\ldots,s_k\}$ of the $s$-stable Kneser graph $KG(n,k)_{s-\mbox{stab}}$ verifies that $s_1 < s_2 < \ldots < s_k$.
Then, $S$ is described unequivocally by $s_1$ and the gaps $l_1(S),\ldots,l_k(S)$ such that for $i\in [k-1]$, $l_i(S)=s_{i+1}-s_i$ and $l_k(S)=s_1+n-s_k$.
Observe that any automorphism of $KG(n,k)_{s-\mbox{stab}}$ ``preserves'' the gaps $l_i$, i.e.
if $\phi\in\mbox{Aut}(KG(n,k)_{s-\mbox{stab}})$ there exist $\alpha\in D_{2k}$
such that $l_i(\phi(S))=l_{\alpha(i)}(S)$ for all $i\in [k]$.

If $n\geq sk+2$, then $S_1=\{1,1+s,1+2s,\ldots,1+(k-1)s\}\in [n]^k_s$ and $S_2=\{1,2+s,2+2s,\ldots,2+(k-1)s\}\in [n]^k_s$. Therefore, from Theorem \ref{MM}, we have that no automorphism of $KG(n,k)_{s-\mbox{stab}}$ maps $S_1$ to $S_2$, since $l_1(S_2)=s+1$ but $l_i(S_1)=s$ for $i\in [k-1]$ and $l_k(S_1)\geq s+2$.

Besides, in \cite{PTMV} it is proved that if $S\in [ks+1]^k_s$ then exactly one gap $l_m(S)$ is equal to $s+1$ and the remaining gaps are equal to $s$. From this fact we have that $KG(sk+1,k)_{s-\mbox{stab}}$ is vertex transitive.
\end{proof}

Next, we will analize some aspects related to colourings of $s$-stable Kneser graphs.
Let $\alpha(G)$ and $\chi^*(G)$ the \emph{independence number} and
\emph{fractional chromatic number} of a graph $G$, respectively.

\begin{proposition}\label{fcn}
Let $n,k,s\geq2$ with $n\geq sk+1$. Then, $\chi^*(KG(n,k)_{s-\mbox{stab}})=\frac{n}{k}$.
\end{proposition}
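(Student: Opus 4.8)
The plan is to establish the two inequalities $\chi^*(KG(n,k)_{s-\mbox{stab}}) \geq \frac{n}{k}$ and $\chi^*(KG(n,k)_{s-\mbox{stab}}) \leq \frac{n}{k}$ separately, using the standard characterization of the fractional chromatic number via the independence number. Recall the general bound $\chi^*(G) \geq \frac{|V(G)|}{\alpha(G)}$, which holds for any graph. So first I would compute $\alpha(KG(n,k)_{s-\mbox{stab}})$: by Theorem \ref{indep}, the maximum independent sets are exactly the stars $\mathcal{I}_i = \{I \in [n]^k_s : i \in I\}$, so $\alpha = |\mathcal{I}_1|$. I would also need to count $|V(KG(n,k)_{s-\mbox{stab}})| = |[n]^k_s|$. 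A clean counting argument (the ``gaps'' description from the proof of Theorem \ref{vertextrans}, or a direct cyclic argument) gives $|[n]^k_s| = \frac{n}{n-sk}\binom{n-sk+k-1}{k-1}$ and, by an analogous count restricted to subsets through a fixed point, $\alpha = \binom{n-sk+k-1}{k-1}$. Taking the ratio then yields $\frac{|V|}{\alpha} = \frac{n}{n-sk} \cdot \frac{n-sk}{\,?\,}$, which I expect to collapse to exactly $\frac{n}{k}$ after the binomial coefficients cancel; verifying this cancellation is the arithmetic heart of the lower bound.

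For the upper bound $\chi^*(KG(n,k)_{s-\mbox{stab}}) \leq \frac{n}{k}$, I would exhibit an explicit fractional colouring of weight $\frac{n}{k}$. The natural candidate is the \emph{symmetric} fractional colouring coming from the $D_{2n}$-action (Remark \ref{diedral}): assign to every maximum independent set $\mathcal{I}_i$ the same weight $w$, for $i \in [n]$. Each vertex $S \in [n]^k_s$ lies in exactly $k$ of the stars $\mathcal{I}_i$ (namely those indexed by the $k$ elements of $S$), so the fractional covering constraint $\sum_{i : S \in \mathcal{I}_i} w \geq 1$ becomes $kw \geq 1$, forcing $w = \frac{1}{k}$. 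The total weight is then $\sum_{i=1}^{n} \frac{1}{k} = \frac{n}{k}$, giving the desired upper bound. This half is the easier direction and is essentially forced by vertex-transitivity of the stars under $D_{2n}$ together with each vertex meeting exactly $k$ stars.

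The main obstacle I anticipate is the lower bound, and specifically the exact evaluation of $|[n]^k_s|$ and $\alpha(KG(n,k)_{s-\mbox{stab}})$ so that $\frac{|V|}{\alpha}$ simplifies cleanly to $\frac{n}{k}$. The counting of cyclically $s$-stable $k$-subsets is a classical but slightly delicate computation (one must handle the circular distance constraint correctly and avoid off-by-one errors at the wrap-around gap $l_k$), and matching it against the count of stars requires care. An alternative route that sidesteps explicit enumeration is to invoke the no-homomorphism / fractional-chromatic duality for \emph{vertex-transitive} graphs: when $n = sk+1$ the graph is vertex-transitive by Theorem \ref{vertextrans}, and for such graphs $\chi^*(G) = \frac{|V(G)|}{\alpha(G)}$ holds with equality, so the lower bound would follow automatically once $\alpha$ is known. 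For general $n \geq sk+1$, however, the graph need not be vertex-transitive, so I would fall back on combining the universal bound $\chi^* \geq \frac{|V|}{\alpha}$ with the explicit upper-bound colouring above; as long as both evaluate to $\frac{n}{k}$, the two inequalities pinch together and the proposition follows. I would therefore present the upper bound via the symmetric star-colouring first (clean and self-contained), and then secure the lower bound either by the transitive-graph identity in the case $n=sk+1$ or by the direct count of $|V|$ and $\alpha$ in general.
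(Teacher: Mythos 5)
Your overall strategy is the same as the paper's: lower bound via $\chi^*(G)\geq |V(G)|/\alpha(G)$ using Talbot's characterization of the maximum independent sets as the stars $\mathcal{I}_i$, upper bound via a weight-$\frac{n}{k}$ fractional colouring (your explicit star colouring with weight $\frac1k$ on each $\mathcal{I}_i$ is correct and if anything more self-contained than the paper's appeal to $\chi^*(KG(n,k))=\frac nk$ by monotonicity). The upper-bound half of your proposal is fine.

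The gap is in the lower bound, exactly where you flagged the risk. Your stated formula $|[n]^k_s|=\frac{n}{n-sk}\binom{n-sk+k-1}{k-1}$ is wrong: the correct count is $\frac{n}{k}\binom{n-(s-1)k-1}{k-1}$ (equivalently $\frac{n}{n-(s-1)k}\binom{n-(s-1)k}{k}$; check $s=2$, where the classical count of $k$-independent sets in $C_n$ is $\frac{n}{n-k}\binom{n-k}{k}=\frac nk\binom{n-k-1}{k-1}$, not $\frac{n}{n-2k}\binom{n-k-1}{k-1}$). Your value of $\alpha=\binom{n-sk+k-1}{k-1}=\binom{n-(s-1)k-1}{k-1}$ is correct, so with your $|V|$ the ratio comes out as $\frac{n}{n-sk}$, not $\frac{n}{k}$, and the two bounds do not pinch. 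The repair is precisely the observation you already use in your upper bound: every vertex lies in exactly $k$ of the $n$ stars, all of which have size $\alpha$, so double counting incidences gives $k\,|[n]^k_s| = n\,\alpha$ and hence $|V|/\alpha = n/k$ with no explicit enumeration needed. This is the paper's argument; your fallback via vertex-transitivity only covers $n=sk+1$ and does not rescue the general case.
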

\begin{proof}
It is immediate to observe
that $\chi^*(KG(n,k)_{s-\mbox{stab}})\leq\frac{n}{k}$ (see, e.g Theorem 7.4.5 in \cite{GodRoy01}). To see the converse inequality,
we use the fact that for any graph $G$, $\chi^*(G)\geq\frac{|V(G)|}{\alpha(G)}$.

So, let us compute $|[n]^k_s|=|V(KG(n,k)_{s-\mbox{stab}})|$.
From \cite{Talbot}, since the sets $\mathcal{I}_i$ are maximum independent sets for $i\in [n]$, $\alpha(KG(n,k)_{s-\mbox{stab}})=|\mathcal{I}_i|=
\left(\begin{array}[h]{c}
n-(s-1)k-1\\
k-1
\end{array}\right)$.

Then, to compute $|[n]^k_s|$, let us observe that $\bigcup_{i=1}^n\mathcal{I}_i=[n]^k_s$ and $\sum_{i=1}^n|\mathcal{I}_i|=n\left(\begin{array}[h]{c}
n-(s-1)k-1\\
k-1
\end{array}\right)$, where each vertex of $KG(n,k)_{s-\mbox{stab}}$ is computed $k$ times. Then,

$$|[n]^k_s|=n\left(\begin{array}[h]{c}
n-(s-1)k-1\\
k-1
\end{array}\right)-(k-1) |[n]^k_s|.$$

Hence $|[n]^k_s|=\frac{n}{k}\left(\begin{array}[h]{c}
n-(s-1)k-1\\
k-1
\end{array}\right)$ and the result follows.
\end{proof}

As we have mentioned before, Schrijver \cite{Sch} proved that the graphs $KG(n,k)_{2-\mbox{stab}}$ are $\chi$-critical
subgraphs of $KG(n,k)$ but it is an open problem to compute the chromatic number of $s$-stable Kneser graphs.
From the last result and Proposition 2 in \cite{Meun11} we have
$$\frac{n}{k}\leq\chi(KG(n,k)_{s-\mbox{stab}})\leq n-(k-1)s.$$

In particular, if $n=ks+1$ we obtain that $\chi(KG(ks+1,k)_{s-\mbox{stab}})=s+1$, which is an alternative proof to compute the exact value of $\chi(KG(ks+1,k)_{s-\mbox{stab}})$ already studied in \cite{Meun11} and \cite{PTMV}.


\begin{thebibliography}{10}

\bibitem{Braun}
B.~Braun,
\newblock Symmetries of the stable Kneser graphs,
\newblock Advances in Applied Mathematics 45 (2010) 12--14.


\bibitem{GodRoy01}
C.~D.~Godsil, G.~Royle,
\newblock Algebraic graph theory,
\newblock Graduate Texts in Mathematics. Springer, 2001.

\bibitem{Kneser}
M. Kneser, \emph{Aufgabe 360}, Jahresbericht der Deutschen Mathematiker-Vereinigung, 2. Abteilung, vol. 50,
1955, pp. 27.

\bibitem{LLT98}
B.~Larose, F.~Laviolette, C.~Tardif,
\newblock On normal Cayley graphs and Hom-idempotent graphs,
\newblock European Journal of Combinatorics 19 (1998) 867--881.

\bibitem{Lov78}
L.~Lov\'asz,
\newblock Kneser's conjecture, chromatic number and homotopy,
\newblock Journal of Combinatorial Theory, Series A 25 (1978) 319--324.

\bibitem{Meun11}
F.~Meunier,
\newblock The chromatic number of almost stable Kneser hypergraphs,
\newblock Journal of Combinatorial Theory, Series A 118 (2011) 1820--1828.

\bibitem{Sch} A. Schrijver, Vertex-critical subgraphs of Kneser graphs, Nieuw Arch. Wiskd. (3) 26 (3) (1978) 454--461.

\bibitem{Talbot}
J. Talbot,
\newblock Intersecting families of separated sets,
\newblock J. Lond. Math. Soc. (2) 68 (1) (2003) 37--51.

\bibitem{PTMV}
P. Torres, M. Valencia-Pabon,
\newblock Stable Kneser Graphs are almost all not weakly Hom-Idempotent,
\newblock submitted 2015. https://lipn.univ-paris13.fr/~valenciapabon/papers/hom-idemp-stables-v4.pdf.

\bibitem{Ziegler}
G. Ziegler, Generalized Kneser coloring theorems with combinatorial proofs, Invent. Math. 147 (2002) 671--691.

%
%

%
%
%
%
%
%
%


\end{thebibliography}
\end{document}